\numberwithin{equation}{section}
\numberwithin{figure}{section}
\theoremstyle{plain}
\newtheorem{thm}{\protect\theoremname}
\theoremstyle{plain}
\newtheorem{lem}[thm]{\protect\lemmaname}
\theoremstyle{definition}
\newtheorem{defn}[thm]{\protect\definitionname}
\theoremstyle{plain}
\newtheorem{prop}[thm]{\protect\propositionname}
\theoremstyle{remark}
\newtheorem{rem}[thm]{\protect\remarkname}
\theoremstyle{plain}
\newtheorem{cor}[thm]{\protect\corollaryname}
\let\myFoot\footnote
\renewcommand{\footnote}[1]{\myFoot{#1\vspace{3mm}}}
\providecommand{\corollaryname}{Corollary}
\providecommand{\definitionname}{Definition}
\providecommand{\lemmaname}{Lemma}
\providecommand{\propositionname}{Proposition}
\providecommand{\remarkname}{Remark}
\providecommand{\theoremname}{Theorem}
\begin{document}
\title{Skorokhod energy of planar domains}
\author{Mrabet Becher \thanks{Institut Pr\'eparatoire aux Etudes des Ing\'enieurs de Monastir, Tunisia.} ,
Maher Boudabra \thanks{King Fahd University of Petroleum and Minerals, KSA.}
, Fethi Haggui \thanks{Institut Pr\'eparatoire aux Etudes des Ing\'enieurs de Monastir, Tunisia.}}
\maketitle
\begin{abstract}
In this work, we introduce the Skorokhod energy of a simply connected
domain. We show that among all domains solving the planar Skorokhod
embedding problem, the Gross solution generates the domain with the minimal
Skorokhod energy.
\end{abstract}

\section{Introduction and main results}

In a pretty recent paper which appeared in $2019$ \cite{gross2019}, the
author R. Gross considered an interesting planar version of the Skorokhod
problem, which was originally formulated in dimension one in $1961$(we refer the reader to \cite{Obloj2004} for a concise survey
of the linear version). The statement of the planar Skorokhod embedding
problem is as follows : Given a non-degenerate  distribution $\mu$ with zero mean
and finite second moment, is there a simply connected domain $U$
(containing the origin) such that if $Z_{t}=X_{t}+Y_{t}i$ is a standard
planar Brownian motion, then $X_{\tau_U}=\Re(Z_{\tau_U})$ has the distribution
$\mu$, where $\tau_U$ is the exit time from $U$. Gross provided an
affirmative answer with a smart and explicit construction of the domain.
In addition, the generated domain has an exit time of finite average.
Gross has also observed that the solution domain is not unique without
further conditions. The core tool of Gross's idea is the following theorem. 
\begin{thm}[L\'evy's theorem]
\label{conformal }  Let $f:U\rightarrow\mathbb{C}$ be a non-constant
analytic function and $Z_{t}$ be a planar Brownian motion running
inside $U$. Then there is a planar Brownian motion $W_{t}$ such
that $f(Z_{t})=W_{\sigma(t)}$ with 
\[
\sigma(t)=\int_{0}^{t}\vert f'(Z_{s})\vert^{2}ds
\]
and $t\in[0,\tau_{U}]$. 
\end{thm}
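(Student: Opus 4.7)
The plan is to run Itô's formula on the real and imaginary parts of $f$, observe that analyticity forces the two components of $f(Z_t)$ to become orthogonal continuous local martingales with a common clock $\sigma(t)$, and then invoke the Dambis--Dubins--Schwarz time-change theorem (with Knight's orthogonal-martingales refinement) to recognize $f(Z_{\sigma^{-1}(\cdot)})$ as a planar Brownian motion.

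Write $Z_t=X_t+iY_t$ and $f=u+iv$ with $u,v$ real harmonic functions on $U$ satisfying the Cauchy--Riemann equations $u_x=v_y$, $u_y=-v_x$. Applying Itô's formula to $u(X_t,Y_t)$ and $v(X_t,Y_t)$, the finite-variation parts are $\tfrac12(u_{xx}+u_{yy})\,dt$ and $\tfrac12(v_{xx}+v_{yy})\,dt$, both of which vanish by harmonicity. Thus
\[
du(Z_t)=u_x(Z_t)\,dX_t+u_y(Z_t)\,dY_t,\qquad dv(Z_t)=v_x(Z_t)\,dX_t+v_y(Z_t)\,dY_t,
\]
so $u(Z_t)$ and $v(Z_t)$ are continuous local martingales on $[0,\tau_U]$. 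The cross-bracket and brackets compute to
\[
d\langle u(Z),v(Z)\rangle_t=(u_xv_x+u_yv_y)\,dt=0,\qquad d\langle u(Z)\rangle_t=d\langle v(Z)\rangle_t=(u_x^2+u_y^2)\,dt=|f'(Z_t)|^2\,dt,
\]
where the first identity uses $u_x=v_y,\ u_y=-v_x$ and the second uses $|f'|^2=u_x^2+u_y^2$. So the two coordinate martingales are orthogonal and share the common clock $\sigma(t)=\int_0^t|f'(Z_s)|^2\,ds$.

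Since $f$ is non-constant, $f'$ has only isolated zeros in $U$, so $\sigma$ is strictly increasing and continuous on $[0,\tau_U]$, hence possesses a continuous inverse $\sigma^{-1}$. By the Dambis--Dubins--Schwarz theorem applied to each coordinate, there exist Brownian motions $B^{1},B^{2}$ with $u(Z_t)=B^{1}_{\sigma(t)}$ and $v(Z_t)=B^{2}_{\sigma(t)}$; Knight's theorem, applied because the brackets $\langle u(Z),v(Z)\rangle$ vanish, ensures $B^1$ and $B^2$ are independent. Setting $W_t:=B^{1}_t+iB^{2}_t$ gives a standard planar Brownian motion with $f(Z_t)=W_{\sigma(t)}$ for $t\in[0,\tau_U]$, as claimed.

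The main technical point I expect to dwell on is justifying the time change cleanly: one must observe that the zeros of $f'$ are isolated so $\sigma$ is genuinely a strictly increasing continuous clock, and one must cite the orthogonal form of DDS (Knight) rather than just DDS to promote two individually Brownian coordinates into a \emph{planar} Brownian motion. Everything else is a direct Itô-plus-Cauchy--Riemann computation.
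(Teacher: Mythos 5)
Your proof is correct and is the standard argument for L\'evy's conformal invariance theorem (It\^o's formula plus the Cauchy--Riemann equations to identify the quadratic variations and the vanishing cross-bracket, then Dambis--Dubins--Schwarz together with Knight's theorem to assemble the planar Brownian motion $W$); the paper states this theorem only as a cited classical background result and offers no proof of its own, so there is nothing to compare against. The one point worth tightening is that on $[0,\tau_U]$ the total quadratic variation $\sigma(\tau_U^-)$ may be finite, so the DDS/Knight construction of $B^1$ and $B^2$ in general requires an enlargement of the probability space carrying an independent Brownian motion to continue after time $\sigma(\tau_U^-)$ --- a routine but worth-mentioning caveat.
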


Theorem \ref{conformal } is commonly known as the conformal invariance
principle. That is, the image of a planar Brownian motion under the
action of a non-constant analytic function is yet another planar Brownian
motion, except it runs at a different speed. With the same notation,
it is equivalent to say that $f(Z_{\sigma^{-1}(t)})$ is planar Brownian
motion. The random time $\sigma(\tau_{U}^{-})$ (left limit of $\sigma(t) $ at $\tau_U$) is referred to as
the projection of the exit time $\tau_{U}$. It is not always the
case that $\sigma(\tau_{U}^{-})$  is the exit time of $f(U)$ unless
$f$ is proper which is the case when $f$ is univalent. A weaker alternative of the properness is the so-called ``$B$-properness'' introduced in \cite{markowsky2015exit}. In particular,
the law of $f(Z_{\tau_{U}})$ is obtainable from that of $Z_{\tau_{U}}$ \cite{markowsky2018distribution}.
The idea of Gross is based on this fact. In order to solve the problem,
it is enough to construct a univalent map acting on the unit disc
such that $\Re(f(e^{\xi i}))$ samples as $\mu$ when $\xi$ is uniformly distributed in $(-\pi,\pi)$. As $Z_{\tau_{\mathbb{D}}}$
is uniformly distributed on the unit circle, then the distribution
of $\Re(f(e^{\xi i}))$ is the same as the distribution of the real part
of a stopped planar Brownian motion on the boundary of $f(\mathbb{D})$. 

In \cite{boudabra2019remarks}, the two authors, Boudabra and Markowsky
gave geometric conditions which, when imposed, make the domain unique. Moreover, the frame of treated distributions has been drastically
enlarged to cover all of finite $p^{th}$ moments where $p$ belongs
to $(1,+\infty)$. Later on, the two authors published a second paper
which generated a new category of domains that solve the Skorokhod
embedding problem. As in their first paper, the new solution applies
to any distribution of finite $p^{th}$ moment ($p>1$). Furthermore,
a uniqueness criterion has been given. We shall use their terminology ``$\mu$-domain''
to tag any simply connected domain that solves the problem. The differences
between the two solutions are summarized as follows :
\begin{itemize}
\item Gross construction :
\begin{itemize}
\item $U$ is symmetric over the real line. 
\item $U$ is $\Delta$-convex, i.e the segment joining any point of $U$
and its symmetric one over the real axis remains inside $U$.
\item If $\mu(\{x\})>0$ then $\partial U$ contains a vertical line segment
(possibly infinite). 
\end{itemize}
\item Boudabra-Markowsky's construction :
\begin{itemize}
\item $U$ is $\Delta^{\infty}$-convex, i.e the upward half ray at any
point of $U$ remains inside $U$.
\item If $\mu(\{x\})>0$ then $\partial U$ contains a downward half ray
at $x$. 
\end{itemize}
\end{itemize}
If $(a,b)$ is a gap for the support of $\mu$, i.e., $(a,b)$ is a null set within the support of $\mu$ \footnote{~It means that the c.d.f of $\mu$ is constant on $(a,b)$.},
then any possible solution would contain the vertical strip $(a,b)\times(-\infty,+\infty)$.
As per the uniqueness, the authors essentially showed that $\Delta^{\infty}$-convexity
or symmetry and $\Delta$-convexity guarantee uniqueness. We provide
the following two examples illustrating the differences of the two
methods. 

\begin{figure}[H]
~~~~~~~~~~~~~~~~~~~~\includegraphics[width=6cm,height=6cm,keepaspectratio]{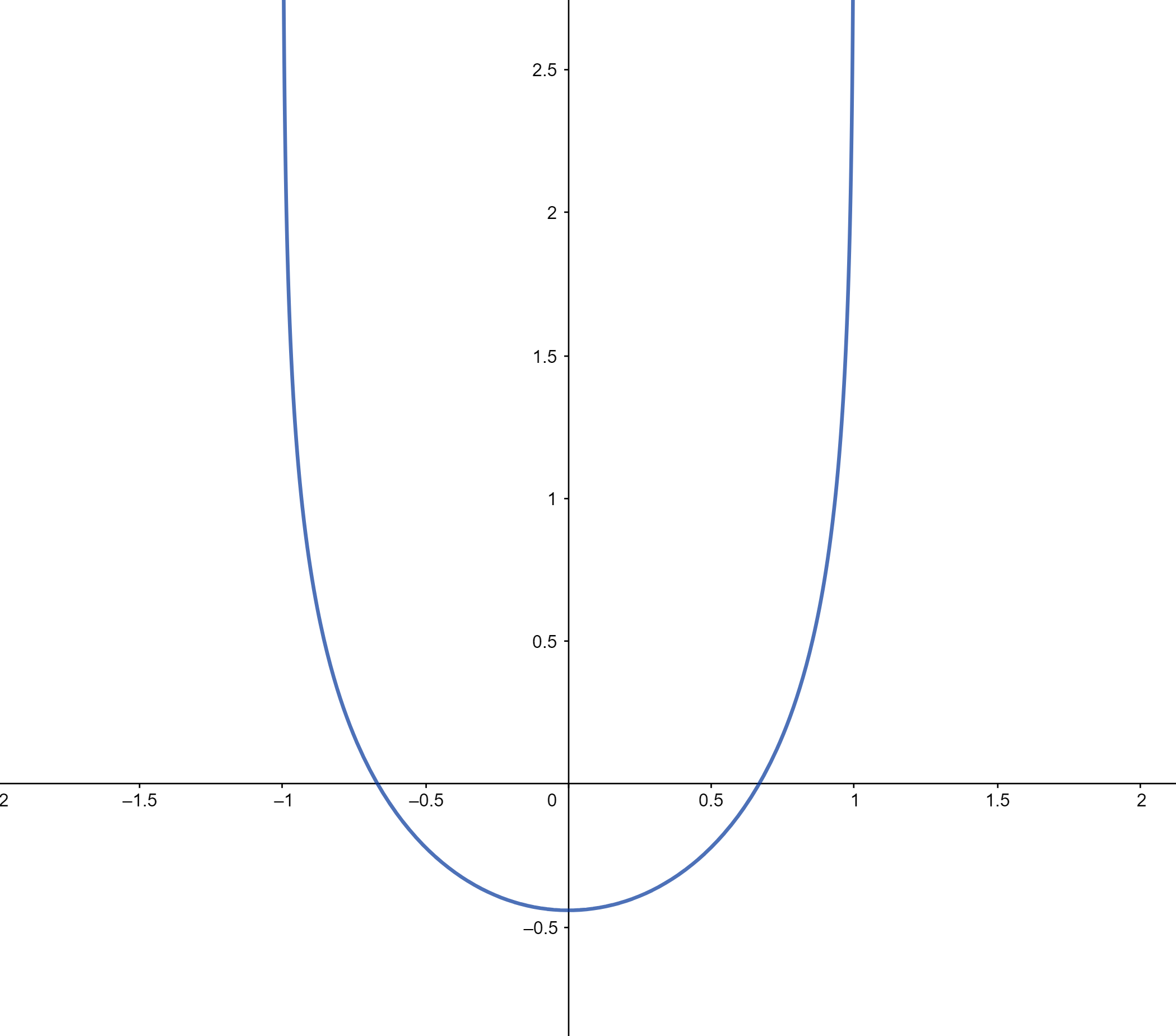}\includegraphics[width=5cm,height=5cm,keepaspectratio]{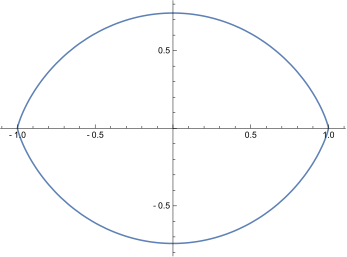}\caption{{\small{}For the uniform distribution on $(-1,1)$, the left domain
is Boudabra-Markowsky's solution while Gross's solution is on the right. }}

\end{figure}

\begin{figure}[H]
~~~~~~~~~~~~~~~~~~~~~~~\includegraphics[width=6cm,height=6cm,keepaspectratio]{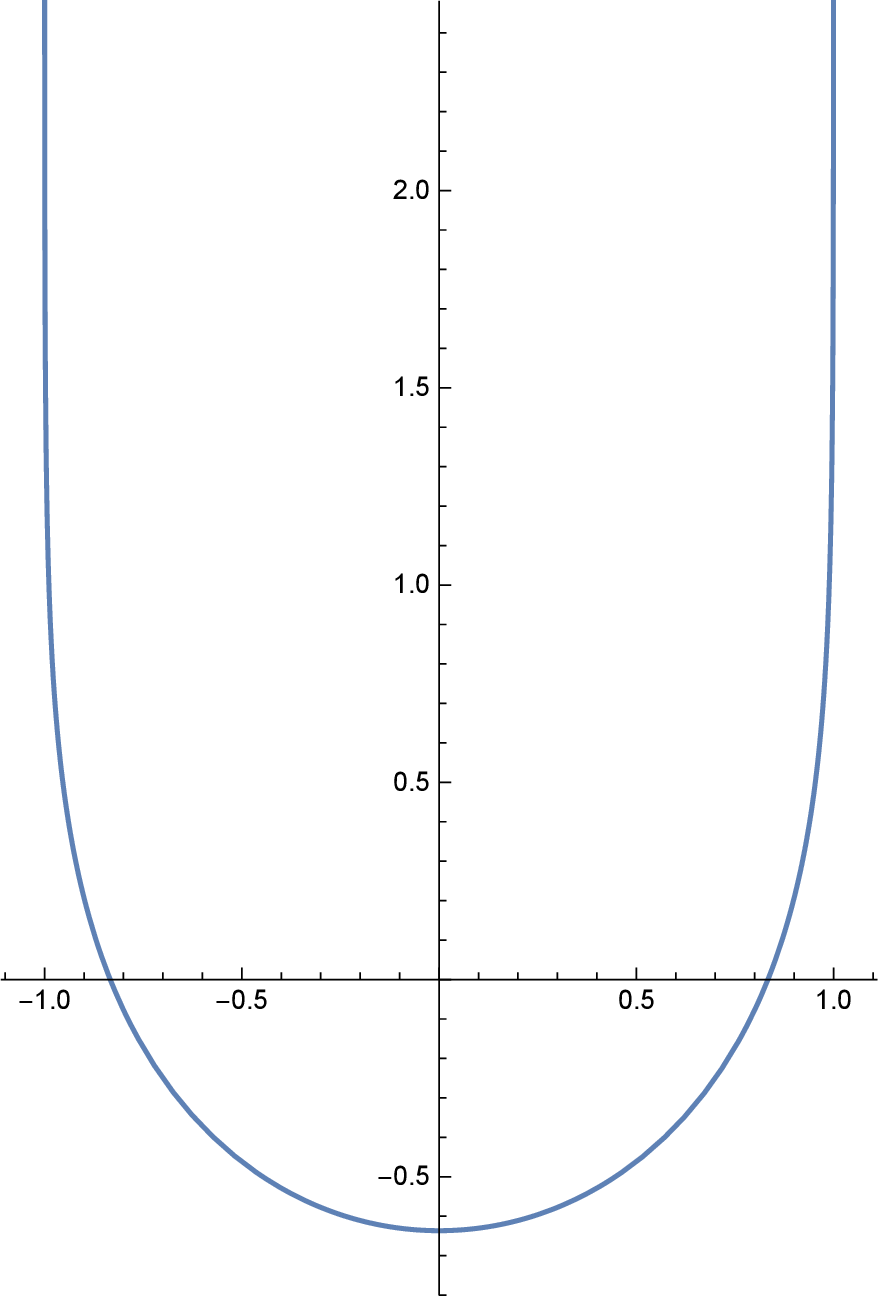}~~~~~~\includegraphics[width=5cm,height=5cm,keepaspectratio]{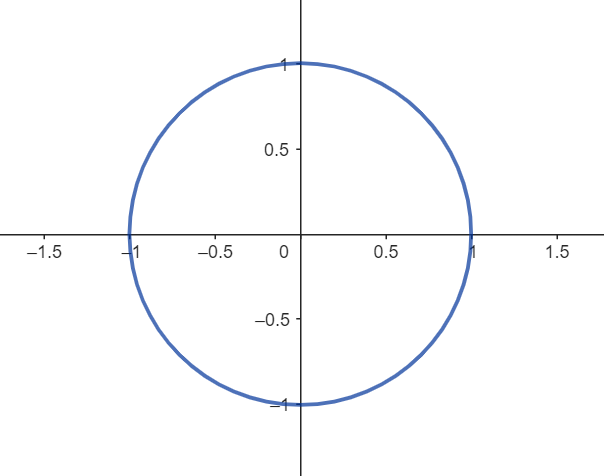}

\caption{{\small{}The domains generated from the shifted-scaled arcsine distribution
$d\mu(x)=\frac{1}{\pi\sqrt{1-x^{2}}}dx$. Gross's method generates the
unit disc while Boudabra-Markowsky's method generates the left domain.}}

\end{figure}

Surprisingly, the second solution turns out to solve completely a
separate optimization problem posed and partially answered in \cite{mariano2020}.
The question therein was to seek the $\mu$-domain having the lowest
or the largest principal Dirichlet eigenvalue. The authors in \cite{mariano2020}
gave the $\mu$-domain of minimal principal Dirichlet eigenvalue for
$\mu$ being the uniform distribution on $(-1,1)$. Their argument
leans on the geometry of the domain they have constructed. In \cite{Boudabra2020},
Boudabra and Markowsky noticed that this argument applies perfectly
to their solution. In particular, they recovered the $\mu$-domain
provided in \cite{mariano2020}. The question about the largest principal
Dirichlet eigenvalue is still open. Throughout this work, the following
notations and terminologies will be adopted : 
\begin{itemize}
\item $\Vert f\Vert_{p}=\left(\frac{1}{2\pi}\int_{0}^{2\pi}\vert f(t)\vert^{p}dt\right)^{\frac{1}{p}}$
where $p>0$.
\item $L_{2\pi}^{p}$ : the space of $2\pi$-periodic functions $f$ such
that $\Vert f\Vert_{p}$ is finite.
\item $W_{2\pi}^{1,p}$ the Sobolev space for the $2\pi$-periodic functions. 
\item The word ``domain'' means a simply connected open region in the plane. Unless otherwise mentioned, the domains considered in this manuscript contain the origin. 
\item $(Z_{t})_{t}$ refers to a standard planar Brownian motion.
\item $\tau_{U}$ is the exit time of $(Z_{t})_{t}$ from a domain $U$. 
\end{itemize} \par
In the literature and in the context of physics, the kinetic energy
of a function $f$, refers to the energy associated with the motion
of particles described by that function. It is a concept commonly
encountered in classical mechanics, where the kinetic energy of an
object is determined by its mass and velocity. In mathematical terms,
the kinetic energy of a (real valued) $2\pi$-periodic function $f$,
which we denote by $\mathfrak{E}(f)$, is proportional to the average
of the squared norm of its velocity. That is
\[
\mathfrak{E}(f):={ \frac{1}{2}}{ \frac{1}{2\pi}}\int_{0}^{2\pi}\vert f'(t)\vert^{2}dt={ \frac{1}{2}}\Vert f'\Vert_{2}^{2}
\]
where $f'$ is the distributional derivative of $f$. In particular,
the energy $\mathfrak{E}(f)$ is infinite for any function $f$ with
jumps. To see this, remark that if $f$ has a discontinuity at $t_{0}$
then $f'(t_{0})=\delta_{t_{0}}\notin L_{2\pi}^{2}$. From Fourier
series theory, $f$ has a Fourier expansion 
\[
f(\theta)=\frac{a_0}{2}+\sum_{n=1}^{+\infty}a_{n}\cos(n\theta)+b_{n}\sin(n\theta)
\]
 and hence 
\[
f'(\theta)=\sum_{n=1}^{+\infty}-na_{n}\sin(n\theta)+nb_{n}\cos(n\theta)
\]
in the distribution sense. Thus, the kinetic energy of $f$ simplifies to
\[
\mathfrak{E}(f)={ \frac{1}{4}}\sum_{n=1}^{+\infty}n^{2}(a_{n}^{2}+b_{n}^{2}).
\]
 When the kinetic energy of $f$ is finite then the Fourier coefficients
decay faster than $\frac{1}{n}$ at infinity. In this context, we
introduce the Skorokhod energy of a domain $U$ containing the origin.
Before that, we state the following Lemma. 

\begin{lem}
\label{well defined} Let $f$ be a univalent function mapping $\mathbb{D}$
onto a domain $U$ and fixing the origin. Then the quantity $\mathfrak{E}(\Re(f(e^{ti})))$
is independent of $f$. 
\end{lem}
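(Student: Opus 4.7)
The plan is to reduce the lemma to a translation-invariance statement via the uniqueness clause of the Riemann mapping theorem. Specifically, if $f_1, f_2 : \mathbb{D} \to U$ are two univalent maps with $f_j(0)=0$, then the automorphism $f_1^{-1} \circ f_2$ of $\mathbb{D}$ fixes the origin, hence by Schwarz's lemma it is a rotation $z \mapsto e^{i\alpha}z$ for some $\alpha \in \mathbb{R}$. Consequently $f_2(z) = f_1(e^{i\alpha}z)$, so the two boundary functions $g_j(t) := \Re(f_j(e^{it}))$ satisfy $g_2(t) = g_1(t+\alpha)$ as $2\pi$-periodic distributions.

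Once this is in hand, I would conclude by observing that $\mathfrak{E}$ is manifestly translation invariant: for any $2\pi$-periodic $g$,
\[
\mathfrak{E}(g(\cdot+\alpha)) \;=\; \frac{1}{4\pi}\int_{0}^{2\pi} |g'(t+\alpha)|^{2}\,dt \;=\; \frac{1}{4\pi}\int_{0}^{2\pi}|g'(s)|^{2}\,ds \;=\; \mathfrak{E}(g),
\]
after the substitution $s=t+\alpha$ and using periodicity. This gives $\mathfrak{E}(g_2) = \mathfrak{E}(g_1)$, which is exactly the claim.

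A second, equivalent route I would mention for robustness works directly with Fourier coefficients and so avoids any a priori assumption on boundary regularity of $f$. Writing the power series $f_1(z) = \sum_{n \geq 1} c_n z^n$ on $\mathbb{D}$, the cosine-sine Fourier coefficients of $g_1$ satisfy $a_n^2 + b_n^2 = |c_n|^2$; for $f_2$ the coefficients are $c_n e^{in\alpha}$, with the same moduli. Hence
\[
\mathfrak{E}(g_j) \;=\; \tfrac{1}{4}\sum_{n=1}^{+\infty} n^{2}|c_n|^{2}
\]
depends only on $U$. This representation also makes it transparent that the lemma holds in the extended sense $\mathfrak{E} \in [0,+\infty]$, even when $f$ admits no classical boundary values.

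The argument presents no real obstacle; the only point requiring a small amount of care is interpreting $g(t) = \Re(f(e^{it}))$ when $f$ does not extend continuously to $\partial\mathbb{D}$. The Fourier-series formulation handles this uniformly, since the coefficients $c_n$ are defined purely from the power series. Thus the proof reduces to one appeal to Schwarz's lemma plus a one-line change of variables, with the Fourier identity serving as a convenient bookkeeping device.
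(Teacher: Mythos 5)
Your proposal is correct and follows the same route as the paper: both reduce to the fact that two origin-fixing Riemann maps of $U$ differ by a rotation of $\mathbb{D}$ (you justify this via Schwarz's lemma, the paper simply cites it), and then conclude by translation invariance of $\mathfrak{E}$ under a change of variables. Your supplementary Fourier-coefficient formulation is also already present in the paper, in the remark immediately following the lemma.
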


\begin{proof}
The existence of a map $f$ is guaranteed by the famous Riemann mapping
theorem. Moreover, any such two functions $f$ and $g$ are related
by some rotation, i.e $f(z)=g(e^{\theta i}z)$. The result follows
by a standard change of variable.
\end{proof}
\newpage
Note that if $$f(z)=\sum_{n=1}^{+\infty}c_{n}z^{n}$$ then 
\[
\mathfrak{E}(\Re(f(e^{ti})))=\frac{1}{4}\sum_{n=1}^{+\infty}n^{2}\vert c_{n}\vert^{2}
\]
where $f(e^{ti})$ is interpreted as the radial limit function
of $f$. In the second section, we will see that such a limit does
always exist in the frame of Hardy spaces and particularly for univalent functions. Now we give the definition of
the Skorokhod energy of a domain $U$. 
\begin{defn}
The Skorokhod energy of $U$, denoted by $\Lambda(U)$, is the quantity
$\mathfrak{E}(\Re(f(e^{ti})))$ where $f$ is any univalent function
mapping $\mathbb{D}$ onto $U$ and fixing the origin.
\end{defn}

By virtue of Lemma \ref{well defined}, the Skorokhod energy of
$U$ is well defined since it does not depend on the choice of the
mapping function. By a slight abuse of notation, we define the Skorokhod energy of an analytic function $$f(z)=\sum_{n=0}^{+\infty}c_n z^n $$ defined on $\mathbb{D}$ by simply setting $$\Lambda(f)=\mathfrak{E}(\Re(f(e^{ti}))).$$ In particular, when $f$ is univalent then $\Lambda(f)=\Lambda(f(\mathbb{D}))$. 

For example, the Skorokhod energy of a centered
disc with radius $r$ is $\frac{r^{2}}{4}$. The next result provides
a criterion to test the finiteness of the Skorokhod energy. 
\begin{prop}
\label{prop:A-finite-energy} A finite Skorokhod energy implies finite perimeter
and area.
\end{prop}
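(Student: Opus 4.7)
The plan is to write the perimeter and the area of $U$ as explicit series in the Taylor coefficients of a normalized Riemann map $f:\mathbb{D}\to U$, $f(z)=\sum_{n=1}^{+\infty}c_n z^n$, and compare them to the series representation $\Lambda(U)=\frac{1}{4}\sum_{n=1}^{+\infty}n^2|c_n|^2$. The hypothesis $\Lambda(U)<+\infty$ says that the sequence $(nc_n)_n$ belongs to $\ell^2$, i.e.\ that $f'\in H^2(\mathbb{D})$, and both estimates will boil down to this observation.

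For the area I would use the standard fact that, by univalence of $f$ and passage to polar coordinates,
\[
\operatorname{Area}(U)=\int_{\mathbb{D}}|f'(z)|^2\,dA(z)=\pi\sum_{n=1}^{+\infty}n|c_n|^2,
\]
the second equality being Parseval applied to $f'(z)=\sum_{n\geq 1}n c_n z^{n-1}$ on concentric circles. Since $n\leq n^2$ for $n\geq 1$, this gives $\operatorname{Area}(U)\leq 4\pi\,\Lambda(U)<+\infty$ immediately.

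For the perimeter I would first note that Parseval applied on the unit circle yields
\[
\frac{1}{2\pi}\int_0^{2\pi}|f'(e^{i\theta})|^2\,d\theta=\sum_{n=1}^{+\infty}n^2|c_n|^2=4\Lambda(U),
\]
so $f'\in H^2(\mathbb{D})\subset H^1(\mathbb{D})$. The classical theorem of F.\ and M.\ Riesz (see e.g.\ Duren's \emph{Theory of $H^p$ Spaces}) then guarantees that $\partial U$ is a rectifiable Jordan curve with
\[
\operatorname{Perimeter}(\partial U)=\int_0^{2\pi}|f'(e^{i\theta})|\,d\theta.
\]
A Cauchy--Schwarz estimate finishes the job:
\[
\int_0^{2\pi}|f'(e^{i\theta})|\,d\theta\leq\sqrt{2\pi}\left(\int_0^{2\pi}|f'(e^{i\theta})|^2\,d\theta\right)^{1/2}=4\pi\sqrt{\Lambda(U)}<+\infty.
\]

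The only non-routine ingredient is the rectifiability/length identity, which requires invoking the $H^1$-theory of conformal maps rather than a direct coefficient computation; the rest is bookkeeping with Parseval's formula. Since the next section of the paper is dedicated precisely to Hardy spaces, this ingredient can be quoted without detour.
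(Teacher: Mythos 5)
Your argument is correct and essentially identical to the paper's: the perimeter bound via $\ell(U)=\int_0^{2\pi}|f'(e^{i\theta})|\,d\theta=2\pi H_1(f')$ followed by Cauchy--Schwarz against $H_2(f')^2=4\Lambda(U)$, and the area bound via $\operatorname{area}(U)=\pi\sum n|c_n|^2\leq 4\pi\Lambda(U)$, are exactly the two estimates in the text. The only cosmetic difference is that the paper takes the isoperimetric inequality $\operatorname{area}(U)\leq\frac{1}{4\pi}\ell(U)^2$ as its primary route to the area bound and records your direct coefficient computation as a remark, whereas you do the reverse.
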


A consequence of Proposition \ref{prop:A-finite-energy} is that unbounded
domains or bounded ones with irregular boundaries are of infinite
Skorokhod energies. In particular, the $\mu$-domains generated by the
technique in \cite{Boudabra2020} have infinite Skorokhod energy as
they are unbounded. Besides, all $\mu$-domains generated from distributions with unbounded support are of infinite Skorokhod energy. Note the property
of having a finite Skorokhod energy is not a size-hereditary property,
i.e if $\Lambda(U)$ is finite and $V\subset U$ then $\Lambda(V)$
is not necessarily finite. 

Now we are almost ahead of our main result. Let $U$ be a domain and
run a standard planar Brownian motion $(Z_{t})_{t}$ inside $U$. Let $\mu_{U}$ be the distribution of $\Re(Z_{\tau_{U}})$,
the real part of $Z_{t}$ killed upon hitting the boundary of $U$. Let $G$ be $\mu_{U}$-domain obtained by applying Gross's method to $\mu_{U}$. In particular, both domains $U$ and $G$ are
$\mu_{U}$-domains. 
\begin{thm}
\label{main theorem} Let $U$ be a domain and let $G$ be the Gross
domain generated by $\mu_{U}$. Then the Skorokhod energy of $G$ is at most that of $U$. In other words, 
\[
\Lambda(G)\leq\Lambda(U).
\]
\end{thm}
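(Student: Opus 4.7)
The plan is to identify the boundary trace $\Re(g(e^{it}))$ of the Gross Riemann map as the symmetric decreasing rearrangement of $\Re(f(e^{it}))$ on the circle, and then appeal to a Pólya--Szeg\H o type inequality. Let $f:\mathbb{D}\to U$ and $g:\mathbb{D}\to G$ denote the Riemann maps fixing the origin (which, by Lemma \ref{well defined}, we are free to rotate so as to normalise the geometry of $G$), and set
\[
\phi_U(t):=\Re(f(e^{it})),\qquad \phi_G(t):=\Re(g(e^{it})),
\]
interpreted as radial limit functions. By the conformal invariance of planar Brownian motion (Theorem \ref{conformal }) together with the uniformity of $Z_{\tau_{\mathbb{D}}}$ on the unit circle, both $\phi_U(t)$ and $\phi_G(t)$ are distributed according to $\mu_U$ when $t$ is uniform on $(-\pi,\pi)$. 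The conclusion is trivial when $\Lambda(U)=+\infty$, so I would assume $\Lambda(U)<+\infty$; by the Fourier-series formula for the Skorokhod energy, this is equivalent to $\phi_U\in W^{1,2}_{2\pi}$.

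Next I would extract from the listed properties of the Gross construction that $\phi_G$ equals the symmetric decreasing rearrangement $\phi_U^{\ast}$ of $\phi_U$ on the circle, i.e.\ the essentially unique even function which is non-increasing on $(0,\pi)$ and equidistributed with $\phi_U$. The symmetry of $G$ about the real axis forces $g(\bar z)=\overline{g(z)}$, hence the Taylor coefficients of $g$ are real and $\phi_G$ is even. The $\Delta$-convexity combined with symmetry forces $\partial G$ to be the union of two graphs $y=\pm h(x)$ over some interval $[a_-,a_+]$; as $t$ runs through $[0,\pi]$, the continuous curve $g(e^{it})$ sweeps the upper graph monotonically from the rightmost to the leftmost vertex of $G$, so $\phi_G$ is non-increasing on $(0,\pi)$. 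Equidistribution with $\phi_U$ then pins down $\phi_G=\phi_U^{\ast}$ almost everywhere.

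Once this identification is in place, the theorem follows from the Pólya--Szeg\H o inequality on the circle: for every $\psi\in W^{1,2}_{2\pi}$, the symmetric decreasing rearrangement $\psi^{\ast}$ satisfies $\|(\psi^{\ast})'\|_2\le \|\psi'\|_2$. Specialising to $\psi=\phi_U$ gives
\[
\Lambda(G)=\tfrac12\|\phi_G'\|_2^2=\tfrac12\|(\phi_U^{\ast})'\|_2^2\le \tfrac12\|\phi_U'\|_2^2=\Lambda(U).
\]

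The main obstacle I anticipate is the middle step: rigorously reading off the monotonicity and evenness of $\phi_G$ from the geometric description of Gross's construction, particularly when $\mu_U$ has atoms (so $\partial G$ contains vertical segments) or gaps in its support (so $\partial G$ contains full vertical strips). These degeneracies correspond to intervals on which $\phi_G$ is constant and hence contribute nothing to the Dirichlet integral, so the identification with $\phi_U^{\ast}$ still goes through, but a careful case analysis is required. By contrast, the circular Pólya--Szeg\H o inequality is classical and is most efficiently proved via the coarea formula, so I expect no serious trouble there.
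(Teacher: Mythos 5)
Your proposal is correct and follows the same overall strategy as the paper: identify the boundary trace of the Gross map with the symmetric decreasing rearrangement of $\Re(f(e^{it}))$, then invoke the P\'olya--Szeg\"o inequality on the circle. The only genuine divergence is in the identification step, which you rightly flag as the delicate point. You derive the evenness and monotonicity of $\phi_G$ from the geometry of $G$ (symmetry over $\mathbb{R}$ plus $\Delta$-convexity), which commits you to boundary-correspondence arguments and a case analysis around atoms and support gaps. The paper sidesteps all of this by using the analytic definition of the Gross map: $\Psi$ is by construction the power series built from the Fourier cosine coefficients of $\theta\mapsto Q(\frac{|\theta|}{\pi})$, where $Q$ is the quantile function of $\mu_U$, so its boundary trace is literally $Q(\frac{|\theta|}{\pi})$ --- even and monotone on $(0,\pi)$ by fiat. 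Corollary \ref{last corollary} identifies $\phi_U^{*}$ with $Q(1-\frac{|\theta|}{\pi})$, and the observation that the series built from the Fourier coefficients of $Q(1-\frac{|\theta|}{\pi})$ is $\Psi(-z)$, a rotation of $\Psi$ onto the same domain $G$, completes the identification with no geometry needed. Your uniqueness argument (equidistributed, even, monotone on $(0,\pi)$ pins down the rearrangement) is essentially Proposition \ref{equality } of the paper and is sound; but if you want to avoid the prime-end and boundary-regularity issues entirely, read the monotonicity off the construction of the Gross map rather than off the shape of $G$.
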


Theorem \ref{main theorem} enables us to define the Skorokhod energy
of a distribution $\mu$ to be $\Lambda(G)$ where $G$ is the Gross
domain generated by $\mu$. We shall denote such a quantity by $\Lambda(\mu)$. Now, we provide some explained examples:
\begin{enumerate}
\item By the uniqueness theorem in \cite{boudabra2019remarks}, the unit
disc is the Gross domain of the scaled-shifted arc-sine law given
by $d\mu(x)=\frac{dx}{\pi\sqrt{1-x^{2}}}\mathbf{1}_{\{x\in(-1,1)\}}$.
That is, the Skorokhod energy of $\mu$ is same as $\Lambda(\mathbb{D})$. In particular 
\begin{equation}\label{arcsine}
    \Lambda(\mu)=\frac{1}{4}.
\end{equation}

\item Let $\mu$ be the uniform distribution on $(-1,1)$. From \cite{gross2019},
the corresponding Gross $\mu$-domain is the image of the unit disc
by the map 
\[
\alpha(z)=-\frac{8}{\pi^{2}}\sum_{n=1}^{+\infty}\frac{z^{2n-1}}{(2n-1)^{2}}.
\]
Therefore the Skorokhod energy of $\mu=\text{Uni}(-1,1)$ is 
\begin{equation}\label{unif}
    \Lambda(\mu)=\frac{1}{4}\frac{8^2}{\pi^{4}}\sum_{n=1}^{+\infty}\frac{1}{(2n-1)^{2}}=\frac{1}{4}\frac{8^2}{\pi^{4}}\frac{\pi^{2}}{8}=\frac{2}{\pi^2}.
\end{equation}

\item Let $\mathscr{P}_{m}$ be the regular polygon of vertices $e^{\nicefrac{2\pi ki}{m}}$,
$k=0,...,m-1$ where $m$ is an even integer greater than $2$. A
conformal map from the unit disc onto $\mathscr{P}_{m}$ and fixing
the $e^{\nicefrac{2\pi ki}{m}}$'s, is given by 
\[
f_{m}(z)=\frac{m}{\beta(\nicefrac{1}{m},\nicefrac{(m-2)}{m})}\sum_{n=0}^{\infty}\frac{(\nicefrac{2}{m})_{n}z^{mn+1}}{n!(mn+1)}
\]
where $(x)_{n}:=x(x+1)\cdots(x+n-1)$ and $\beta$ refers to the beta
function (See \cite{markowsky2018distribution}). As $\mathscr{P}_{m}$
fulfills the uniqueness criterion in \cite{boudabra2019remarks}, then it is the Gross domain generated
from the distribution of $\Re(Z_{\tau_{\mathscr{P}_{m}}})$. The Skorokhod
energy of such distribution is infinite as 
\[
\sum_{n=0}^{\infty}n^{2}\left(\frac{(\nicefrac{2}{m})_{n}}{n!(mn+1)}\right)^{2}=+\infty.
\]
\end{enumerate}
\begin{figure}[H]
\begin{centering}
\includegraphics[width=6cm,height=6cm,keepaspectratio]{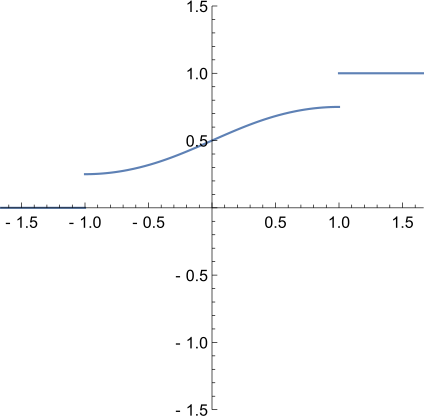}
\par\end{centering}
\caption{{\small{}The graph of the c.d.f of the distribution of $\Re(Z_{\tau_{S}})$
where $S=\frac{2e^{-\frac{\pi i}{4}}}{\sqrt{2}}f_{4}(\mathbb{D})$
is the square of vertices $\{\pm1\pm i\}$. The presence of the two
jumps at $\pm1$ is reflected by the atoms of the distribution.}}
\end{figure}

The original definition of the Skorokhod energy of $\mu$ uses the coefficients of the power series $f$ mapping the unit disc onto the underlying $\mu$-domain $U$. This might represent some computational challenges regarding the coefficients of $f$. However, when the probability distribution $\mu$ is continuous of p.d.f $\rho$, we obtain a closed integral form for the Skorokhod energy of $\mu$. To state the result, we first recall some statistical quantities related to $\mu$. The quantile function of
$\mu$ is defined for every $u\in(0,1)$ by
\begin{equation}
Q(u)=\inf\{x\mid F(x)\geq u\}=\sup\{x\mid F(x)<u\}\,\,\,(a.e).\label{quantile-1}
\end{equation}
where $F$ is the c.d.f of $\mu$, i.e $F(x)=\mu((-\infty,x])$, $x\in \mathbb{R}$. In other words, $Q$ is the pseudo-inverse of $F$. In particular, $Q$ simplifies
to the standard inverse function when $F$ is strictly increasing. The interchange
between $\inf$ and $\sup$ in \ref{quantile-1} is justified by
the fact that the set of discontinuities of $F$ is at most countable.
The quantile function is very handy to generate random variables.
More precisely, if $\xi$ is uniformly distributed on $(0,1)$ then $Q(\xi)$
samples as $\mu$.  
\begin{thm}\label{integral form}
    Assume that $\mu$ has a density $\rho$, and let $Q$ be the quantile function of $\mu$. Then we have
\begin{equation}\label{closed}
   \Lambda(\mu)=\frac{1}{2\pi^2}\int_{0}^{1}\frac{1}{\rho(Q(x))^{2}}dx 
\end{equation} 
provided that $Q'$ has a Fourier series obtained by termwise differentiation of the Fourier series of $Q$.

\end{thm}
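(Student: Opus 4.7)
The plan is to write down the boundary-value function of the Gross conformal map explicitly in terms of the quantile $Q$, and then read off the energy via Parseval together with a single change of variables. Since $\mu$ has a density, its c.d.f.\ $F$ is continuous and $Q = F^{-1}$ is strictly increasing on $(0,1)$. Gross's construction produces a domain $G$ symmetric under complex conjugation together with a univalent $f : \mathbb{D} \to G$ fixing the origin, such that $\phi(t) := \Re f(e^{it})$ is even in $t$ and, by the geometry of the construction ($G$ is $\Delta$-convex with no vertical boundary pieces because $\mu$ is atomless), strictly decreasing on $(0,\pi)$ from $\sup\mathrm{supp}\,\mu$ down to $\inf\mathrm{supp}\,\mu$. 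Equating $\mathbb{P}(\phi(T) \leq x) = F(x)$ for $T \sim \mathrm{Unif}(0,\pi)$ then forces
\[
\phi(t) = Q\!\Bigl(1 - \tfrac{t}{\pi}\Bigr), \qquad t \in (0,\pi),
\]
extended evenly to $(-\pi,0)$.

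Next, I would use Parseval to pass from $\phi$ to its energy. Symmetry of $G$ under conjugation makes the Taylor coefficients $c_n$ of $f$ real, so $\phi(t) = \sum_{n \geq 1} c_n \cos(nt)$ is a pure cosine series. The hypothesis of the theorem, transferred to $\phi$ via the affine substitution $u = 1 - t/\pi$, says exactly that $\phi'(t) = -\sum_{n \geq 1} n c_n \sin(nt)$ is the Fourier series of an honest $L^2$ derivative of $\phi$; Parseval therefore gives
\[
\Lambda(\mu) = \Lambda(G) = \mathfrak{E}(\phi) = \tfrac{1}{2}\|\phi'\|_2^2 = \frac{1}{2\pi}\int_0^{\pi}\!|\phi'(t)|^2\,dt,
\]
where the evenness of $\phi$ folded the integral to $(0,\pi)$. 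From $\phi'(t) = -Q'(1-t/\pi)/\pi$ and the substitution $u = 1 - t/\pi$ we obtain
\[
\Lambda(\mu) = \frac{1}{2\pi^2} \int_0^1 Q'(u)^2 \, du,
\]
and differentiating $F(Q(u)) = u$ with $F' = \rho$ converts this into the claimed identity \eqref{closed}.

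The only genuine subtlety I expect is the Fourier-differentiation hypothesis. Without it the distributional derivative $\phi'$ could pick up singular pieces at jumps, and $\|\phi'\|_{L^2}^2$ would strictly undercount the true kinetic energy $\frac{1}{4}\sum n^2 c_n^2$; the assumption is precisely what rules that out and legitimises the Parseval step. Everything else — the identification of $\phi$ with $Q(1-\cdot/\pi)$ from Gross's recipe and the two elementary calculus manipulations — is routine bookkeeping.
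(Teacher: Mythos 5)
Your proposal is correct and follows essentially the same route as the paper: identify the boundary values of the real part of the Gross map with the (rearranged) quantile function, use the Fourier-differentiation hypothesis to equate the kinetic energy with $\frac12\Vert\phi'\Vert_2^2$, and finish with the substitution $u=1-t/\pi$ and $Q'=1/(\rho\circ Q)$. The only cosmetic differences are that the paper works with $Q(\vert t\vert/\pi)$ rather than its rearrangement $Q(1-t/\pi)$ (the two conformal maps differ by $z\mapsto -z$, so the energies agree) and quotes Gross's construction directly for the boundary identification instead of re-deriving it from monotonicity and equidistribution.
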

To illustrate that \ref{closed} is consistent with the original definition of the Skorokhod energy, we reiterate the earlier examples. 
\begin{itemize}
 \item When $\mu$ is the uniform distribution on $(-1,1)$ then we find $Q(x)=2x-1$. We get 
 \[\frac{1}{2\pi^{2}}\int_{0}^{1}\frac{1}{\rho(Q(x))^{2}}dx=\frac{1}{2\pi^{2}}\int_{0}^{1}4dx=\frac{2}{\pi^{2}}\] which matches the value in \ref{unif}.

\item When $\mu(x)$ is the scaled-shifted arc-sine law then we find $Q(x)=-\cos(\pi x)$. We get 
\[\frac{1}{2\pi^{2}}\int_{0}^{1}\frac{1}{\rho(Q(x))^{2}}dx=\frac{1}{2\pi^{2}}\int_{0}^{1}\pi^{2}(1-\cos(\pi x)^{2})dx=\frac{1}{2\pi^{2}}\frac{\pi^{2}}{2}=\frac{1}{4}\]
which matches the value in \ref{arcsine}.
\end{itemize}
\par
Since Theorem \ref{integral form} provides a closed form for the Skorokhod energy of continuous distributions (with bounded support), it can be used to generate identities involving infinite sums. For example, we can recover the identity $$\sum_{n=1}^{+\infty}\frac{1}{(2n-1)^{2}}=\frac{\pi^2}{8}.$$

\section{Tools and proofs}

In this section, we provide all required ingredients of the results
stated in the previous section. 
\begin{defn}
The Hilbert transform of a $2\pi$- periodic function $f$ is defined
by 
\[
H\{f\}(x):=PV\left\{ \frac{1}{2\pi}\int_{-\pi}^{\pi}f(x-t)\cot(\frac{t}{2})dt\right\} =\lim_{\eta\rightarrow0}\frac{1}{2\pi}\int_{\eta\leq|t|\leq\pi}f(x-t)\cot(\frac{t}{2})dt
\]
where $PV$ denotes the Cauchy principal value, which is required
here as the trigonometric function $t\longmapsto\cot(\cdot)$ has
poles at $k\pi$ with $k\in\mathbb{Z}$. The Hilbert transform is
a bounded operator on $L_{2\pi}^{p}$ with $p>1$. More precisely
we have 
\end{defn}

\begin{thm}
\cite{butzer1971hilbert} If $f$ is in $L_{2\pi}^{p}$ then $H\{f\}$
does exit almost everywhere for $p>1$. Furthermore
\begin{equation}
\Vert H\{f\}\Vert_{p}\leq\lambda_{p}\Vert f\Vert_{p}\label{strong inequality}
\end{equation}
for some positive constant $\lambda_{p}$.
\end{thm}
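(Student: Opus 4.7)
The plan is to separate the two assertions: first establish the $L^p$ boundedness (\ref{strong inequality}) and then deduce almost-everywhere existence of $H\{f\}$ as a consequence. For the inequality I would handle $p=2$ by Fourier analysis and then extrapolate to the full range $1<p<\infty$ by the classical M.\ Riesz argument combined with interpolation.

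\textbf{The $p=2$ case.} Expanding $f\in L^2_{2\pi}$ in Fourier series and using the distributional identity $\tfrac{1}{2}\cot(t/2)=\sum_{n\ge 1}\sin(nt)$, one checks that $H$ acts as the Fourier multiplier $H\{e^{in\theta}\}=-i\,\mathrm{sgn}(n)\,e^{in\theta}$. Parseval then immediately yields $\|H\{f\}\|_2\le\|f\|_2$, so $\lambda_2=1$.

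\textbf{General $p$.} I would reproduce M.\ Riesz's analytic-function argument. Given a nonnegative trigonometric polynomial $f$, consider the holomorphic function $F(z)=u(z)+iv(z)$ on $\mathbb{D}$ whose real part $u$ is the Poisson extension of $f$ and whose imaginary part satisfies $v(0)=0$; the boundary values of $v$ coincide almost everywhere with $H\{f\}$. For even integers $p=2m$, expanding $\Re(u+iv)^p$ and integrating on $|z|=r<1$ produces an identity that bounds $\|v\|_p^p$ by a linear combination of terms of the form $\|u\|_k\|v\|_{p-k}$; solving the resulting polynomial inequality yields a finite constant $\lambda_p$ with $\|v\|_p\le\lambda_p\|u\|_p$. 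Interpolation (Marcinkiewicz between a weak-$(1,1)$ Calder\'on--Zygmund estimate and the $p=2$ bound) covers all $1<p\le 2$, and duality covers $2\le p<\infty$. A density argument from trigonometric polynomials then extends the inequality to all of $L^p_{2\pi}$ and, as a by-product, defines $H\{f\}$ intrinsically as an element of $L^p_{2\pi}$.

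\textbf{Almost-everywhere existence.} Introduce the truncated operator $H_\eta$ and the maximal Hilbert transform $H^*f:=\sup_{\eta>0}|H_\eta f|$. Cotlar's inequality $H^*f\le M(Hf)+C\,Mf$, where $M$ is the Hardy--Littlewood maximal function on the circle, combined with the preceding $L^p$ bound on $Hf$, yields the strong-type $(p,p)$ inequality for $H^*$. Since $H_\eta f(x)\to H\{f\}(x)$ pointwise for every trigonometric polynomial $f$, the standard density-plus-maximal principle upgrades this convergence to hold almost everywhere for every $f\in L^p_{2\pi}$. The principal obstacle in the whole program is the general-$p$ case of (\ref{strong inequality}): the $p=2$ bound is just Parseval and the a.e.\ assertion is essentially a corollary of the $L^p$ estimate paired with Cotlar's inequality, whereas the full M.\ Riesz inequality requires a genuine argument and in fact fails at the endpoints $p=1,\infty$.
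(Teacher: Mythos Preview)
The paper does not supply a proof of this theorem at all: it is quoted as a classical result with a citation to Butzer--Nessel, and the surrounding text explicitly refers the reader to the literature for ``all proofs and properties of the Hilbert transform.'' So there is no in-paper argument to compare against; your outline is simply filling in what the authors chose to take for granted.

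As a self-contained sketch your proposal is sound and follows the standard route: the $p=2$ case via the multiplier description is immediate, the extension to general $p$ by the M.\ Riesz analytic-function trick (or alternatively weak-$(1,1)$ plus Marcinkiewicz and duality) is classical, and deducing a.e.\ existence from the maximal inequality and Cotlar's lemma is the textbook mechanism. One minor redundancy: you invoke both the Riesz even-integer argument and the Calder\'on--Zygmund/Marcinkiewicz route for the range $1<p<2$; either alone suffices, so you could streamline by committing to one. But nothing is wrong, and for the purposes of this paper the result is used only as a black box (indeed only the $p=2$ isometry \eqref{eq:isometry} is actually exploited downstream), so any correct reference-level argument is acceptable.
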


The case $p=1$ is irregular and unfortunately $H$ becomes unbounded.
For all proofs and properties of the Hilbert transform, we refer the
reader to \cite{butzer1971hilbert,king2009hilbert}. When $p=2$,
which is related to our frame, the Hilbert transform becomes an isometry provided the input function has a zero average.
That is, if the constant term in the Fourier expansion of $f$ is zero then 

\begin{equation}
\Vert H\{f\}\Vert_{2}=\Vert f\Vert_{2}.\label{eq:isometry}
\end{equation}
The equality \ref{eq:isometry} can be easily understood if one
knows that $H\{\cos\}=\sin$ and $H\{\sin\}=-\cos$.
\begin{defn}
Let $f$ be an analytic function acting on the unit disc and $p>0$.
The $p^{th}$ Hardy norm of $f$ is defined by 
\begin{equation}
H_{p}(f):=\sup_{0\leq r<1}\left\{ \frac{1}{2\pi}\int_{0}^{2\pi}|f(re^{\theta i})|^{p}d\theta\right\} ^{\frac{1}{p}}=\sup_{0\leq r<1}N_{r,p}\{f\}.\label{hardy}
\end{equation}
\end{defn}

The Hardy norm is well defined since the quantity $N_{r,p}(f)$ is non
decreasing in terms of $r$ \cite{Rudin2001}. A crucial result about
Hardy norms is that, if $H_{p}(f)$ is finite then $f$ has a radial
extension to the boundary $$f^{\dagger}(\xi):=\lim_{r\rightarrow1}f(r\xi)$$
exists a.e for all $\xi\in\partial\mathbb{D}$. Moreover 
\[
\Vert f^{\dagger}\Vert_{p}=H_{p}(f).
\]
We refer the reader to \cite{duren2000theory} for more details about Hardy spaces. For the sake of brevity, we keep writing $f(e^{ti})$ instead of $f^{\dagger}(e^{ti})$.
In \cite{burkholder1977exit}, the author gave a very fancy result
revealing the tight connections between the theory of planar Brownian
motion and the theory of analytic functions. That is, he showed the
following equivalences 

\begin{equation}\label{equivalence}
   H_{p}(f)<\infty\Longleftrightarrow\mathbf{E}(\tau^{\nicefrac{p}{2}})<\infty\Longleftrightarrow\mathbf{E}(\sup_{0\leq t\leq\tau}\vert Z_{t}\vert^{p})\Rightarrow\mathbf{E}(\vert Z_{t}\vert^{p}) 
\end{equation}

where $f$ is proper and $\tau$ is the exit time of a planar Brownian
motion from $f(\mathbb{D})$. The following theorem concerns the range
of $p$ for which the $p^{th}$ Hardy norm of a univalent function
is finite. In particular, it gives sense to the statement in Lemma \ref{well defined}. 
\begin{thm}
\label{hardy norm} \cite{duren2001univalent} If $f$ is univalent then $H_{p}(f)$ is finite
for all $p<\frac{1}{2}$. 
\end{thm}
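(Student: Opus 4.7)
The plan is to reduce to a normalized univalent map and then combine Koebe's distortion theorem with Prawitz's integral--means inequality. Since $f\mapsto \alpha f+\beta$ only multiplies the Hardy norm by a constant (and adding constants is harmless because constants lie in $H^\infty\subset H^p$), I may assume the classical normalization $f(0)=0$, $f'(0)=1$ without loss of generality. Under this normalization, Koebe's distortion theorem gives the sharp growth bound
\[
M(\rho):=\max_{|z|=\rho}|f(z)|\;\leq\;\frac{\rho}{(1-\rho)^{2}},\qquad 0\leq\rho<1.
\]

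Next, I would invoke Prawitz's integral--means inequality, which asserts that for any univalent $f$ on $\mathbb{D}$ with $f(0)=0$ and any $p>0$,
\[
N_{r,p}(f)^{p}=\frac{1}{2\pi}\int_{0}^{2\pi}|f(re^{i\theta})|^{p}\,d\theta\;\leq\;p\int_{0}^{r}\frac{M(\rho)^{p}}{\rho}\,d\rho.
\]
Plugging the Koebe estimate into the right--hand side, one obtains the elementary majorant
\[
p\int_{0}^{r}\frac{\rho^{\,p-1}}{(1-\rho)^{2p}}\,d\rho,
\]
whose singular behaviour at $\rho=1$ is of order $(1-\rho)^{1-2p}$. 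Consequently this integral remains finite on $[0,1]$ precisely when $2p<1$, and taking the supremum over $r\in[0,1)$ gives $H_{p}(f)<\infty$ for every $p<1/2$.

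The hard part, and the only ingredient that genuinely uses univalence in a non--pointwise manner, is Prawitz's inequality itself. It is not a direct consequence of the Koebe bound but instead rests on a circular symmetrization of the image domain $f(\mathbb{D})$, in which an area comparison with a radial slit disc is used to control the integral means. I would either quote this as a classical tool from univalent function theory (it is developed in \cite{duren2001univalent}) or reconstruct it by differentiating $r\mapsto\tfrac{1}{2\pi}\int_{0}^{2\pi}|f(re^{i\theta})|^{p}\,d\theta$ in $r$ and integrating the resulting differential inequality from $0$ to $r$. The Koebe function $z/(1-z)^{2}$, for which $|f(re^{i\theta})|\sim (1-r)^{-2}$ in a fixed angular neighbourhood of $\theta=0$, shows that the exponent $p<1/2$ in Theorem \ref{hardy norm} is sharp, so no improvement of this scheme is possible.
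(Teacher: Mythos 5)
Your argument is correct and is precisely the classical proof given in the reference the paper cites for this theorem (the paper itself offers no proof beyond the citation): normalize to the class $S$, apply Prawitz's integral--means inequality, and feed in the Koebe growth bound $M(\rho)\le\rho/(1-\rho)^{2}$, with the Koebe function showing sharpness at $p=\tfrac12$. The only small imprecision is your description of how Prawitz's inequality is established --- the standard route is not circular symmetrization but the Hardy--Stein identity $r\,\frac{d}{dr}N_{r,p}(f)^{p}=\frac{p^{2}}{2\pi}\iint_{|z|<r}|f|^{p-2}|f'|^{2}\,dx\,dy$ followed by the change of variables $w=f(z)$, where univalence enters only through the valence bound $n(r,w)\le 1$; your proposed alternative of differentiating the integral means in $r$ is exactly this.
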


The last implication in \ref{equivalence} turns out to be an
equivalence provided that $\mathbf{E}(\ln(1+\tau))<\infty$. However, this
requirement has been shown to be redundant when $f$ is univalent
\cite{boudabra2021some}. Theorem \ref{hardy norm} has triggered the question about the supremum
of $p$ such that $H_{p}(f)$ is finite. Such a number is called the
Hardy number \cite{hansen1970hardy}. As the Hilbert transform maps
$\cos$ to $\sin$ and $\sin$ to $-\cos$, then when $f$ fixes the origin, it is not difficult to see that
\[
f(e^{ti})=\Re\Big(f\big(e^{ti}\big)\Big)+H\left\{ \Re\Big(f\big(e^{ti}\big)\Big)\right\} i.
\]
In other words, the imaginary part of $f(e^{ti})$ is simply the Hilbert
transform of its real part. By \ref{eq:isometry}, we get 
\begin{equation}\label{2.5}
  H_{2}(f)^{2}=\Vert f(e^{ti})\Vert_{2}^{2}=2\left\Vert \Re\Big(f\big(e^{ti}\big)\Big)\right\Vert _{2}^{2}
\end{equation}

In particular
\[
\Lambda(f)=\frac{1}{4}H_{2}(f')^{2}.
\]

\subsection{Proof of Proposition \ref{prop:A-finite-energy}}

The planar version of the isoperimetric inequality states that 
\[
\text{area}(U)\leq{ \frac{1}{4\pi}}\ell(U)^{2}
\]
where $\text{area}(U)$ and $\ell(U)$ are the area of $U$ and its
perimeter \cite{osserman1978isoperimetric,philippin1996isoperimetric}.
That is, it is enough to show that a finite Skorokhod energy implies a finite perimeter. Let $$f(z)=\sum_{n=1}^{+\infty}c_{n}z^{n}$$ be a
univalent function mapping $\mathbb{D}$ onto $U$. The perimeter
of $U$ is 

\[
\ell(U)=2\pi H_{1}(f'),
\]
which covers non-smooth boundaries as well. The Cauchy\textendash Bunyakovsky\textendash Schwarz
inequality yields 
\[
\begin{alignedat}{1}\ell(U)^{2} & \leq2\pi\int_{-\pi}^{\pi}\vert f'(e^{ti})\vert^{2}dt\\
 & = 4\pi^{2}H_{2}(f')^{2}\\
 & \overset{\eqref{2.5}}{=}8\pi^{2}\left\Vert \Re\Big(f'\big(e^{ti}\big)\Big)\right\Vert _{2}^{2}\\
 & =16\pi^{2}\Lambda(U).
\end{alignedat}
\]
The result follows. 

Note that an inequality between the area and the Skorokhod energy
can be obtained without involving the perimeter. In fact, a standard
identity for the area of $U$ is given by 
\begin{equation}
\text{area}(U)=\pi\sum_{n=1}^{+\infty}n\vert c_{n}\vert^{2}.\label{area formula}
\end{equation}
In particular we get 
\[
\text{area}(U)\leq 4\pi \frac{1}{4} \sum_{n=1}^{+\infty}n^{2}\vert c_{n}\vert^{2}=4\pi\Lambda(U).
\]
The three quantities $\text{area}(U),\ell(U),\Lambda(U)$ are related
by the inequality 
\[
\text{area}(U)\leq{ \frac{1}{4\pi}}\ell(U)^{2}\leq4\pi\Lambda(U).
\]
The constants ${\textstyle \frac{1}{4\pi}}$ and $4\pi$ are optimal
for $U=\mathbb{D}$ as all the terms are equal to $\pi$.

\subsection{Proof of Theorem \ref{main theorem}}

The proof of Theorem \ref{main theorem} is based on the symmetric
decreasing rearrangement. For a survey about the subject as well as
the main properties, we refer the reader to \cite{kesavan2006symmetrization}.
Let $f$ be a non negative function and let $\xi_{t}^{f}=\frac{\vert f^{-1}(t,\infty)\vert}{2}$.
When $f$ is clear from the context, we shall omit the superscript.
It is obvious that the map $t\mapsto\xi_{t}$ is non increasing. 
\begin{defn}
The symmetric decreasing rearrangement of $f$, denoted by $f^{*}$,
is defined by
\begin{equation}
f^{*}(x)=\int_{0}^{+\infty}1_{(-\xi_{t},\xi_{t})}(x)dt\label{symm dec rea}
\end{equation}
\end{defn}

The set $(-\xi_{t},\xi_{t})$ is the symmetrization of $f^{-1}(t,\infty)$.
Formula \ref{symm dec rea} shows that the function $f^{*}$ is
even, i.e $f^{*}(x)=f^{*}(\vert x\vert)$. As $s\mapsto1_{(-\xi_{s},\xi_{s})}(\vert x\vert)$ is non-increasing then 
\begin{equation}
f^{*}(\vert x\vert)=\sup\{s\mid\vert x\vert<\xi_{s}={\textstyle \frac{\vert f^{-1}(s,+\infty)\vert}{2}}\}.\label{eq:sup}
\end{equation}

The original definition of $f^{*}$, \ref{symm dec rea}, applies
to non-negative functions; however, the identity \ref{eq:sup} extends
to functions with negative values. That is, if $f$ is a function
bounded below, then we define the symmetric decreasing rearrangement
of $f$ by \ref{eq:sup}. In particular, we find 

\[
f^{*}=g^{*}+\inf(0,\inf f)
\]
with $g=f-\inf(0,\inf f)$. Hence, when $f$ hits the negative real
line, then its symmetric decreasing rearrangement is obtained by a simple
shift. A crucial property of the symmetric decreasing rearrangement
is the equimeasurability. In other words, the level sets $\{x\mid f(x)>t\}$
and $\{x\mid f^{*}(x)>t\}$ have the same Lebesgue measure. From a
probabilistic perspective, this property interprets as : A random
variable $\vartheta$ and its symmetric decreasing rearrangement $\vartheta^{*}$
share the same probability law. Note that $\vartheta$ and $\vartheta^{*}$
are not coupled, i.e., they do not necessarily act on the same sample
space, unless it is symmetric, of course. The next lemma characterizes
a function in terms of its variation and its probability distribution,
i.e the measure of its level sets. 
\begin{prop}
\label{equality }Let $I$ be an interval and let $\vartheta,\varrho:I\rightarrow\mathbb{R}$
be two non-decreasing functions with the same distribution. Then $\vartheta=\varrho$
a.e. 
\end{prop}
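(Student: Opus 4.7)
The plan is to argue by contradiction. Suppose that $D := \{x \in I : \vartheta(x) \neq \varrho(x)\}$ has positive Lebesgue measure; after swapping the roles of $\vartheta$ and $\varrho$ if necessary, I may assume that $\{\vartheta < \varrho\}$ already has positive measure.

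Next, I would peel this difference set into countably many rational layers: writing $\{\vartheta < \varrho\} = \bigcup_{r \in \mathbb{Q}} E_r$ with $E_r := \{x \in I : \vartheta(x) < r < \varrho(x)\}$, countable subadditivity supplies a rational $r$ for which $|E_r| > 0$. This rational trick sidesteps any measurability concern since only countably many thresholds are ever inspected.

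The crux is then to exploit the monotonicity hypothesis. The super-level sets $A_r := \{x \in I : \vartheta(x) \geq r\}$ and $B_r := \{x \in I : \varrho(x) \geq r\}$ are upward-closed in $I$ (if one of them contains $x$ then it contains every $y \in I$ with $y \geq x$), so each is a sub-interval of $I$ sharing the right endpoint of $I$; in particular the two are nested. The definition of $E_r$ forces $E_r \subseteq B_r$ and $E_r \cap A_r = \varnothing$, whence $E_r \subseteq B_r \setminus A_r$ and therefore $|B_r| \geq |A_r| + |E_r| > |A_r|$. On the other hand, the equidistribution hypothesis yields $|A_r| = \mu\bigl([r,+\infty)\bigr) = |B_r|$, where $\mu$ is the common pushforward of Lebesgue measure on $I$. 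This contradiction establishes the proposition.

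The main obstacle I anticipate is not analytic but purely a matter of bookkeeping around boundary cases — what if $I$ is unbounded, what if $r$ happens to coincide with a jump value of $\vartheta$ or $\varrho$, or what if one of the super-level sets is empty. All of these are absorbed transparently by the rational-enumeration step, since a monotone function has at most countably many jumps and only countably many rationals are probed; at every other $r$ the super-level sets are genuine half-intervals, which is all the argument requires.
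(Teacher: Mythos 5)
Your argument is correct where the proposition is actually applied, but it takes a genuinely different route from the paper. The paper observes that equidistribution plus monotonicity forces $\inf\{x\mid\vartheta(x)>s\}=\inf\{x\mid\varrho(x)>s\}$ for all $s$ (i.e.\ the two functions have the same generalized inverse) and then cites Lemma 11 of the Boudabra--Markowsky paper to conclude $\vartheta=\varrho$ a.e.; your proof is instead a self-contained contradiction argument -- rational layering of $\{\vartheta<\varrho\}$ into the slabs $E_r$, nestedness of the upward-closed super-level sets $A_r\subseteq B_r$, and the strict inequality $|B_r|\geq|A_r|+|E_r|>|A_r|$ against $|A_r|=|B_r|$. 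What your version buys is independence from the external lemma and from the paper's implicit normalization (its displayed computation $|\vartheta^{-1}(s,\infty)|=1-\inf\{x\mid\vartheta(x)>s\}$ silently takes $I=(0,1)$). Two small points. First, when you pass from $E_r\subseteq B_r\setminus A_r$ to $|B_r|\geq|A_r|+|E_r|$ you are using $A_r\subseteq B_r$; this does follow, since $|E_r|>0$ rules out $B_r\subseteq A_r$ among the two nested alternatives, but it deserves a sentence. Second, your closing claim that an unbounded $I$ is ``absorbed transparently'' is not right: if $|A_r|=|B_r|=\infty$ the strict inequality yields no contradiction, and indeed the proposition as literally stated fails for $\vartheta(x)=x$, $\varrho(x)=x+1$ on $I=\mathbb{R}$, whose pushforwards of Lebesgue measure coincide. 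Finiteness of $|I|$ must be assumed -- as it implicitly is in the paper, where $I$ is $(0,1)$ or $(-\pi,\pi)$ -- and with that hypothesis your proof is complete.
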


\begin{proof}
Since $\vartheta$ and $\varrho$ have the same distribution, then
by monotonicity we get
\[
\begin{alignedat}{1}\vert\vartheta^{-1}(s,\infty)\vert & =\vert\varrho^{-1}(s,\infty)\vert\\
 & =1-\inf\{x\mid\vartheta(x)>s\}\\
 & =1-\inf\{x\mid\varrho(x)>s\}.
\end{alignedat}
\]
Thus $\inf\{x\mid\vartheta(x)>s\}=\inf\{x\mid\varrho(x)>s\}$. By Lemma 11 in \cite{Boudabra2020}, we get $\vartheta=\varrho$ a.e. 
\end{proof}
\begin{rem}
Lemma \ref{equality } can be generalized in the following way : If
$I=\biguplus I_{j}$ (disjoint union of intervals) where $\vartheta,\varrho$ share the same distribution
and have the same monotonicity on each $I_{j}$, then $\vartheta=\varrho$
a.e. 
\end{rem}

In the sequel, $\mu$ is a non-degenerate probability distribution with a bounded support and denote by $F$ its c.d.f. We assume that $\mu$ is centered, i.e has a zero average. The following proposition yields a clue to the relation
between $Q$, the pseudo-inverse of $F$, and its symmetric decreasing rearrangement $Q^{*}$. 
\begin{prop}
\label{Q*=00003DQ(1-2x)} We have 
\begin{equation}
Q^{*}(x)=Q(1-2\vert x\vert)\,\,\,\,(a.e)\label{quantile}
\end{equation}
where $x\in(-\frac{1}{2},\frac{1}{2})$.
\end{prop}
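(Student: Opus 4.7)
My plan is to unfold both sides from their definitions and match them through an explicit description of the super-level sets of $Q$. Since $Q:(0,1)\to\mathbb{R}$ is non-decreasing, the set $\{y\in(0,1)\mid Q(y)>t\}$ is, up to a countable set of exceptional $t$'s, the interval $(F(t),1)$: reading $Q(y)=\inf\{x\mid F(x)\geq y\}$ one gets that $Q(y)>t$ is equivalent to $F(t)<y$ except possibly when $t$ is a jump of $F$. This yields
\[
\xi_{t}^{Q}=\frac{\vert Q^{-1}(t,+\infty)\vert}{2}=\frac{1-F(t)}{2}.
\]

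Next I would plug this into the identity \eqref{eq:sup}. For $x\in(-1/2,1/2)$ this gives
\[
Q^{*}(\vert x\vert)=\sup\{s\mid\vert x\vert<\xi_{s}^{Q}\}=\sup\{s\mid F(s)<1-2\vert x\vert\},
\]
and the last supremum is, by the second representation of the quantile in \eqref{quantile-1}, precisely $Q(1-2\vert x\vert)$. This proves \eqref{quantile} for almost every $x$, the null exceptional set being exactly the one coming from the $\inf$/$\sup$ interchange already acknowledged in \eqref{quantile-1}.

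The only delicate point is purely bookkeeping: justifying the set equality $\{Q>t\}=(F(t),1)$ modulo a Lebesgue-null set when $F$ has jumps or $Q$ has flat stretches; these contribute only a countable collection of exceptional values of $t$ and hence of $x$. As a more abstract alternative, both sides of \eqref{quantile} are even on $(-1/2,1/2)$ and non-increasing on $(0,1/2)$; both are equimeasurable with $Q$ (for the right-hand side, because $x\mapsto 1-2\vert x\vert$ pushes the uniform law on $(-1/2,1/2)$ forward to the uniform law on $(0,1)$). Applied to the negatives of the two functions on $(0,1/2)$, Proposition \ref{equality } would then deliver the a.e.\ equality without any direct computation.
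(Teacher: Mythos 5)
Your proposal is correct and follows essentially the same route as the paper: compute $\xi_{s}^{Q}=\frac{1-F(s)}{2}$, substitute into \eqref{eq:sup}, and identify the resulting supremum with $Q(1-2\vert x\vert)$ via the $\sup$-representation in \eqref{quantile-1}. The extra care you take with the level sets of $Q$ (and the alternative via Proposition \ref{equality }) is sound but does not change the argument in substance.
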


\begin{proof}
We have 
\[
\begin{alignedat}{1}Q^{*}(\vert x\vert) & =\sup\{s\mid\vert x\vert<{\textstyle \frac{\vert Q^{-1}(s,\infty)\vert}{2}}\}\\
 & =\sup\{s\mid\vert x\vert<{\textstyle \frac{1-F(s)}{2}}\}\\
 & =\sup\{s\mid F(s)<1-2\vert x\vert\}\\
 & =Q(1-2\vert x\vert).
\end{alignedat}
\]
\end{proof}
The identity \ref{quantile} is consistent with the definition as
$Q^{*}$ is defined on $(-\frac{1}{2},\frac{1}{2})$ which is the
symmetrization of $(0,1)$. The following corollary follows immediately
from Proposition \ref{Q*=00003DQ(1-2x)}.
\begin{cor}
\label{last corollary} If $\psi:\theta\in(-\pi,\pi)\mapsto\mathbb{R}$
is a random variable sampling as $\mu$ then for almost every $\theta\in(-\pi,\pi)$
\[
\psi^{*}(\theta)=Q(1-{\textstyle \frac{\vert\theta\vert}{\pi}}).
\]
\end{cor}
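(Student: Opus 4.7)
The plan is to apply the definition \eqref{eq:sup} of the symmetric decreasing rearrangement directly to $\psi$, treated as a measurable function on the interval $(-\pi,\pi)$, and use the hypothesis $\psi\sim\mu$ to rewrite the Lebesgue measure of its super-level sets in terms of the c.d.f.\ $F$. The corollary should then drop out by exactly the same algebraic manipulation that gave Proposition \ref{Q*=00003DQ(1-2x)}, up to the rescaling factor $2\pi$ that reflects the length of the sample space.

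Concretely, the first step is to note that interpreting $\psi$ as a random variable on the probability space $\bigl((-\pi,\pi),\tfrac{d\theta}{2\pi}\bigr)$, the assumption $\psi\sim\mu$ yields
\[
|\psi^{-1}(s,+\infty)|=2\pi\,\mathbb{P}(\psi>s)=2\pi\bigl(1-F(s)\bigr).
\]
Plugging this into \eqref{eq:sup} gives
\[
\psi^{*}(|\theta|)=\sup\Bigl\{s\ \Big|\ |\theta|<\tfrac{|\psi^{-1}(s,+\infty)|}{2}=\pi(1-F(s))\Bigr\}=\sup\Bigl\{s\ \Big|\ F(s)<1-\tfrac{|\theta|}{\pi}\Bigr\},
\]
and the right-hand side is exactly $Q\bigl(1-|\theta|/\pi\bigr)$ by the second representation of $Q$ in \eqref{quantile-1}. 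The evenness of $\psi^*$ then lets us replace $|\theta|$ by $\theta$ on the left without losing information.

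A conceptually cleaner variant is to invoke Proposition \ref{Q*=00003DQ(1-2x)} directly via a scaling argument: the map $x\in(-\tfrac12,\tfrac12)\mapsto Q(1-2|x|)=Q^{*}(x)$ and the map $\theta\in(-\pi,\pi)\mapsto Q(1-|\theta|/\pi)$ are related by the affine change of variable $\theta=2\pi x$, and symmetric decreasing rearrangement commutes with such a rescaling of the base interval (since it only reshapes the super-level sets while preserving their relative sizes). Combined with the equimeasurability of $\psi$ and $Q$ on their respective probability spaces, this identifies $\psi^{*}$ with the stated formula.

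The only real obstacle is the bookkeeping between the Lebesgue measure on $(-\pi,\pi)$ (total mass $2\pi$) and the probability measures on $(0,1)$ and $(-\tfrac12,\tfrac12)$ that underlie the definitions of $Q$ and $Q^{*}$; this is where the factor $|\theta|/\pi$ (rather than $2|\theta|$) enters. Once that normalisation is correctly tracked, the proof is purely a matter of unwinding definitions, with no genuine analytic difficulty.
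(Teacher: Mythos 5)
Your proof is correct and follows essentially the route the paper intends: the paper simply asserts that the corollary ``follows immediately from Proposition \ref{Q*=00003DQ(1-2x)}'', and your direct computation from \ref{eq:sup} --- using that $\vert\psi^{-1}(s,+\infty)\vert=2\pi(1-F(s))$ depends only on the law of $\psi$, hence $\xi_s=\pi(1-F(s))$ and the supremum collapses to $Q(1-\vert\theta\vert/\pi)$ --- is exactly the argument of that proposition transplanted to the interval $(-\pi,\pi)$. The normalisation bookkeeping you flag is handled correctly, so nothing is missing.
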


In \cite{gross2019}, the author showed the following result, which is a cornerstone
of his solution for the planar Skorokhod embedding problem. 
\begin{thm}
Let $(a_{n})_{n}$ be the sequence of Fourier coefficients of $\theta\in(-\pi,\pi)\mapsto Q(\frac{\vert\theta\vert}{\pi})$
then 
\[
\Psi(z)=\sum_{n=1}^{\infty}a_{n}z^{n}
\]
 is univalent on the unit disc. 
\end{thm}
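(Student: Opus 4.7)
The plan is to reduce univalence of $\Psi$ to showing that the boundary parametrization $\theta\mapsto\Psi(e^{i\theta})$ traces out a Jordan curve in $\widehat{\mathbb{C}}$, and then to extract univalence via an argument-principle calculation on smooth approximants combined with Hurwitz. Set $g(\theta):=Q(|\theta|/\pi)$; this is $2\pi$-periodic, even, non-decreasing on $(0,\pi)$, with zero mean because $\mu$ is centered. Its cosine Fourier expansion $g(\theta)=\sum_{n\ge 1}a_n\cos(n\theta)$ has no constant term, so using $H\{\cos n\cdot\}=\sin n\cdot$ the boundary trace of $\Psi(z)=\sum a_n z^n$ reads $\Psi(e^{i\theta})=g(\theta)+iH(g)(\theta)$.

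The crux is the inequality $H(g)(\theta)<0$ for $\theta\in(0,\pi)$. I would obtain it by integrating by parts in the singular integral defining $H(g)$, using $\cot(t/2)=2(d/dt)\log|2\sin(t/2)|$ and the vanishing of boundary terms under $2\pi$-periodicity, to rewrite
\[
H(g)(\theta)=\frac{1}{\pi}\int_{-\pi}^{\pi}\log|2\sin((\theta-s)/2)|\,g'(s)\,ds,
\]
where $g'$ is a signed Radon measure, odd and non-negative on $(0,\pi)$. Splitting the integral at $0$ and using oddness collapses this to
\[
H(g)(\theta)=\frac{1}{\pi}\int_0^{\pi}g'(s)\log\frac{|\sin((\theta-s)/2)|}{\sin((\theta+s)/2)}\,ds,
\]
and a short case-check on whether $\theta+s\lessgtr\pi$ shows the log-kernel is strictly negative on $(0,\pi)\times(0,\pi)$. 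Since $\mu$ is non-degenerate, $g'$ is non-trivial, giving $H(g)(\theta)<0$ strictly.

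With $H(g)<0$ on $(0,\pi)$, the boundary curve of $\Psi$ is a simple closed curve in $\widehat{\mathbb{C}}$: on each of $(0,\pi)$ and $(-\pi,0)$ the real part $g$ is strictly monotone, so the parametrization is injective there; and the two arcs live strictly on opposite sides of the real axis by the sign of $H(g)$, meeting only at the real endpoints $g(0)$ at $\theta=0$ and $g(\pi)$ at $\theta=\pm\pi$. To push this to univalence on $\mathbb{D}$, I would approximate $g$ in $L^2$ by even, smooth, strictly increasing $g_n$ on $(0,\pi)$; for each $n$ the corresponding $\Psi_n$ extends continuously to $\overline{\mathbb{D}}$ with smooth Jordan boundary trace, so the argument principle gives univalence of $\Psi_n$, and then Hurwitz applied to the locally uniform convergence $\Psi_n\to\Psi$ inside $\mathbb{D}$ transfers univalence to $\Psi$.

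The main obstacle is the sign inequality $H(g)<0$: once the right log-kernel representation is identified the calculation is short, but justifying the IBP when $g'$ is only a measure (in particular when $\mu$ has gaps, so $Q$ jumps and $g'$ contains Dirac masses) needs care. A secondary subtlety is that $g$ itself may have jumps, in which case the boundary trace of $\Psi$ escapes to $\infty$ and the Jordan-curve argument must be read in $\widehat{\mathbb{C}}$; the Hurwitz step on compact subsets of $\mathbb{D}$ is unaffected.
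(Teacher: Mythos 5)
The paper does not prove this statement at all: it is imported verbatim from Gross \cite{gross2019}, so your proposal can only be compared against Gross's original argument. Your route is sound and shares its key ingredient with Gross's: the sign of the harmonic conjugate of $g(\theta)=Q(|\theta|/\pi)$. Your derivation of that sign is correct: with the paper's normalization one indeed has $H\{\cos\}=\sin$, the integration by parts against $\cot(t/2)=2\frac{d}{dt}\log|2\sin(t/2)|$ produces the kernel $\frac{1}{\pi}\log|2\sin((\theta-s)/2)|$ (boundary terms cancel by periodicity), the oddness of $g'$ collapses it to the kernel $\log\bigl(|\sin((\theta-s)/2)|/\sin((\theta+s)/2)\bigr)$, and the case check on $\theta+s\lessgtr\pi$ does show this is strictly negative on $(0,\pi)^2$; a sanity check with $g=-\cos$ (the arcsine case, $\Psi(z)=-z$) confirms the sign convention. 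Where you diverge from Gross is in the second half: he establishes the sign of $\operatorname{Im}\Psi$ \emph{inside} the upper and lower half-discs via the conjugate Poisson kernel and then counts preimages with the argument principle over the boundary of each half-disc, whereas you work purely on the boundary circle and extract univalence from Darboux's theorem applied to smooth approximants followed by Hurwitz. Your version buys a cleaner treatment of low regularity (all delicate objects are smooth before the limit is taken), at the price of needing the approximation scaffolding; Gross's interior argument needs no approximation because the conjugate Poisson integral is smooth in the open disc regardless of how rough $Q$ is.

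One assertion in your motivational paragraph is wrong as stated and should not survive into a final write-up: when $\mu$ has an atom, $Q$ has a flat stretch, so $g$ is \emph{not} strictly monotone on $(0,\pi)$ and the boundary trace of $\Psi$ itself need not be a simple curve (the Gross domain then has a vertical slit traced twice). This does not damage your actual proof, since the Jordan-curve and Darboux steps are applied only to the smooth, strictly increasing approximants $g_n$, for which the claim is true; but the injectivity claim for $g$ itself should be deleted or restricted to atomless $\mu$. Two small points to make explicit: Hurwitz yields ``univalent or constant,'' so you must observe that $\Psi$ is non-constant because $\mu$ is non-degenerate; and the locally uniform convergence $\Psi_n\to\Psi$ should be justified from the $L^2$ convergence of boundary data (coefficientwise convergence plus a uniform $\ell^2$ bound suffices on compact subsets of $\mathbb{D}$).
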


The Fourier expansion of $Q(\frac{\vert\theta\vert}{\pi})$ is simply
$$\sum_{n=1}^{\infty}a_{n}\cos(n\theta)$$ (no $\sin$ terms), where the constant coefficient is zero since the distribution $\mu$ is assumed centered. Gross'
smart idea was to generate the power series out of the Fourier expansion.
Now, notice that the Fourier coefficients of $Q(\frac{\vert\theta\vert}{\pi})$
and $Q(\frac{\vert\cdot\vert}{\pi})^{*}(\theta)=Q(1-\frac{\vert\theta\vert}{\pi})$ differ by a
$(-1)^{n}$ factor. That is, if we denote by $\mathscr{S}(z)$ the
power series generated by the Fourier coefficients of $Q^{*}$ then
we get 
\[
\mathscr{S}(z)=\Psi(-z).
\]
Hence $\mathscr{S}$ is univalent and it obviously maps the unit
disc to $\Psi(\mathbb{D})$, the same $\mu$-domains obtained by Gross'
technique. The last ingredient of the proof of Theorem \ref{main theorem} is the famous  P\'olya-Szeg\"o  inequality. 
\begin{thm}
\cite{kesavan2006symmetrization} If $f$ is in $W^{1,p}$ then $f^{*}\in W^{1,p}$
and we have 
\[
\Vert\nabla f^{*}\Vert_{p}\leq\Vert\nabla f\Vert_{p}.
\]
\end{thm}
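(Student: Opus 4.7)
The plan is to convert the Fourier-coefficient definition of $\Lambda(\mu)$ into an integral of $(Q')^{2}$ via Parseval's identity, then perform a change of variables, and finally invoke the inverse-function rule $Q'=1/(\rho\circ Q)$. First I would identify the underlying $2\pi$-periodic function whose kinetic energy is being computed. By Gross's construction quoted just before the theorem, the Gross $\mu$-domain is $G=\Psi(\mathbb{D})$ with $\Psi(z)=\sum_{n\geq 1}a_{n}z^{n}$, where $(a_{n})_{n}$ are the Fourier cosine coefficients of the even function $g(\theta):=Q(|\theta|/\pi)$ on $(-\pi,\pi)$. Because $\mu$ is centered, the constant coefficient of $g$ vanishes, so $g(\theta)=\sum_{n\geq 1}a_{n}\cos(n\theta)$ and $\Re(\Psi(e^{i\theta}))=g(\theta)$ a.e. By the definition of Skorokhod energy,
\[
\Lambda(\mu)=\Lambda(G)=\mathfrak{E}(g)=\tfrac{1}{2}\|g'\|_{2}^{2},
\]
where the derivative is distributional.

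Next I would exploit the hypothesis that $Q'$ is obtained by termwise differentiation of the Fourier series of $Q$. This transfers directly to $g$: its distributional derivative equals the $L^{2}$ function $-\sum_{n\geq 1}na_{n}\sin(n\theta)$, so Parseval applies to $g'$. On each of the half-intervals $(-\pi,0)$ and $(0,\pi)$, the classical chain rule gives
\[
g'(\theta)=\frac{\mathrm{sgn}(\theta)}{\pi}\,Q'(|\theta|/\pi)\quad\text{a.e.,}
\]
and the hypothesis guarantees that no singular contribution from the corner $\theta=0$ (nor from $\theta=\pm\pi$) is present in the distributional derivative, so this formula is valid on all of $(-\pi,\pi)$. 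Squaring removes the sign, and the change of variables $u=|\theta|/\pi$ gives
\[
\Lambda(\mu)=\frac{1}{2}\cdot\frac{1}{2\pi}\int_{-\pi}^{\pi}\frac{Q'(|\theta|/\pi)^{2}}{\pi^{2}}\,d\theta=\frac{1}{2\pi^{2}}\int_{0}^{1}Q'(u)^{2}\,du.
\]

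The final step is to replace $Q'$ with $1/(\rho\circ Q)$. Since $\mu$ has density $\rho$, the c.d.f.\ $F$ is absolutely continuous with $F'=\rho$ a.e. The termwise-differentiation hypothesis effectively rules out any plateau of $F$ in the interior of the support (which would force a jump in $Q$ and destroy membership in $W^{1,2}_{2\pi}$), so $F$ is strictly increasing there and $Q$ is its genuine inverse. Differentiating $F(Q(u))=u$ then yields $Q'(u)=1/\rho(Q(u))$ a.e., and substitution produces the claimed identity. The single delicate point in this plan is the clean justification that $g'$ carries no singular part at $\theta=0$; this is exactly what the termwise Fourier-differentiation hypothesis encodes, and once it is in hand the rest is bookkeeping around Parseval and a change of variables.
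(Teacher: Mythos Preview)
Your proposal does not address the stated theorem at all. The statement in question is the P\'olya--Szeg\"o inequality, asserting that symmetric decreasing rearrangement does not increase the $L^{p}$ norm of the gradient. This is a classical result quoted from an external reference and is not proved in the paper; it is merely invoked as a black box in the proof of Theorem~\ref{main theorem}.

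What you have written is instead an argument for the closed integral formula $\Lambda(\mu)=\frac{1}{2\pi^{2}}\int_{0}^{1}\rho(Q(x))^{-2}\,dx$, i.e.\ Theorem~\ref{integral form}. For \emph{that} result your reasoning is essentially identical to the paper's own proof in Section~2.3: both identify $\Re(\Psi(e^{i\theta}))$ with $Q(|\theta|/\pi)$, differentiate via the chain rule, change variables $\theta\mapsto x=|\theta|/\pi$, and finish with the inverse-function identity $Q'=1/(\rho\circ Q)$. But none of this bears on the P\'olya--Szeg\"o inequality, whose proof requires entirely different machinery (level-set analysis, the coarea formula, and the isoperimetric inequality) and which the paper simply cites rather than proves.
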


 The P\'olya-Szeg\"o  inequality states that symmetric decreasing rearrangement
does not boost the norm of the gradient. In particular, the kinetic
energy of a function does not increase under symmetrization. The norm
of the gradient (up to some factor) appears frequently in areas
related to potential theory like electrostatics \cite{payne1967isoperimetric}. 

Our main result follows now. Let $\Phi$ be any univalent function
mapping $\mathbb{D}$ onto $U$ and fixing the origin. We have 
\[
\Lambda(U)=\mathfrak{E}\bigg(\Re\Big(\Phi\big(e^{\theta i}\big)\Big)\bigg)
\]
and 
\[
\Lambda(G)=\mathfrak{E}\bigg(\Re\Big(\Psi\big(e^{\theta i}\big)\Big)\bigg).
\]
The conformal invariance principle implies that $\Re(\Phi(e^{\theta i})),\Re(Z_{\tau_{U}}),\Re(\Psi(e^{\theta i})),\Re(Z_{\tau_{G}})$
sample as $\mu_{U}.$ By Corollary \ref{last corollary}, the symmetric
decreasing rearrangement of $\Re(\Phi(e^{\theta i}))$ is $Q(1-\frac{\vert\theta\vert}{\pi})$
where $Q$ is the quantile function of $\mu_{U}$. In particular,
the univalent function $\mathscr{S}$ maps the unit disc to the Gross
domain, which is the solution of the Skorokhod problem. By the  P\'olya-Szeg\"o  inequality we get 
\[
\Lambda(G)=\frac{1}{2}\left\Vert \Re\Big(\Psi\big(e^{ti}\big)\Big)'\right\Vert _{2}^{2}=\frac{1}{2}\left\Vert Q({\scriptstyle \frac{\vert\cdot\vert}{\pi}})^{*'}\right\Vert _{2}^{2}\leq\frac{1}{2}\Vert Q({\scriptstyle \frac{\vert\cdot\vert}{\pi}})'\Vert_{2}^{2}=\frac{1}{2}\left\Vert \Re\Big(\Phi\big(e^{\theta i}\big)\Big)\right\Vert _{2}^{2}=\Lambda(U)
\]
which proves our claim. 

\subsection{Proof of Theorem \ref{integral form}}
We shall keep the same notations as in the proof of Theorem \ref{main theorem}. We assume that $\mu$ is of bounded support, since otherwise its Skorokhod energy would be infinite by Proposition \ref{prop:A-finite-energy}. The real part of $\Psi(e^{\theta i})$ coincides with the function $Q(\frac{\vert \theta \vert}{\pi})$ where $\theta \in (-\pi,\pi)$. As the quantile function $Q$ is non-decreasing then it is differentiable a.e by virtue of the monotone differentiation theorem \cite{folland1999real}. As $\pm \pi$ are mapped to the same value by $Q(\frac{\vert \theta \vert}{\pi})$, the requirement on the Fourier series of $Q'$ is satisfied, for instance, when $Q$ is continuous and piecewise smooth \cite{vretblad2003fourier}. Thus we have 

$$
\begin{alignedat}{1}\Lambda(\mu) & =\frac{1}{4\pi}\int_{-\pi}^{\pi}(\Re(\Psi(e^{ti}))')^{2}dt\\
 & =\frac{1}{4\pi}\int_{-\pi}^{\pi}(Q(\textstyle \frac{\vert t\vert}{\pi})')^{2}dt\\
 & =\frac{1}{2\pi}\int_{0}^{\pi}(Q(\textstyle \frac{t}{\pi})')^{2}dt\\
 & =\frac{1}{2\pi^{3}}\int_{0}^{\pi}Q'(\textstyle \frac{t}{\pi})^{2}dt\\
 & =\frac{1}{2\pi^{2}}\int_{0}^{1}Q'(x)^{2}dx\\
 & =\frac{1}{2\pi^{2}}\int_{0}^{1}\frac{1}{\rho(Q(x))^{2}}dx.
\end{alignedat}
$$
Note that the technical condition on the Fourier series of $Q'$ has been checked for the examples listed after Theorem \ref{integral form}. 
\section{Comments and further directions}

Theorem \ref{main theorem} can be seen as follows (with the same
notations): If $$\Phi(z)=\sum_{n=1}^{+\infty}b_{n}z^{n}$$ and $$\Psi(z)=\sum_{n=1}^{+\infty}a_{n}z^{n}$$
then 
\[
\sum_{n=1}^{+\infty}n^{2}\vert a_{n}\vert^{2}\leq\sum_{n=1}^{+\infty}n^{2}\vert b_{n}\vert^{2}.
\]

In a very recent manuscript \cite{boudabra2024brownian}, the
authors investigated the geometric map transforming $U$ to $G$.
They proposed to call it ``Brownian symmetrization'' (inspired by Steiner
symmetrization) where they denoted it by $\mathfrak{B}$. In particular, they proved these two
transformations are different. Taking into consideration their geometric perspective, Theorem \ref{main theorem}
reads as : Brownian symmetrization of a bounded domain does not increase
the Skorokhod energy. In other words,
\[
\Lambda(\mathfrak{B}(U))\leq\Lambda(U).
\]

Theorem \ref{main theorem} persuades us to believe that the Brownian
symmetrization does not increase either the area or the perimeter
\cite{boudabra2024brownian}. If this claim is true then using the area formula \ref{area formula},
it implies in particular that 
\[
\sum_{n=1}^{+\infty}n\vert a_{n}\vert^{2}\leq\sum_{n=1}^{+\infty}n\vert b_{n}\vert^{2}.
\] 

However, the effect of Brownian symmetrization on the principal Dirichlet eigenvalue remains an open question. This problem falls within the broader framework of extremal problems for geometric functionals and shape optimization, a field with a rich history tracing back to Queen Dido and the founding of Carthage (see \cite{bandle2017dido}). In this context, it is worthwhile to study the problem of finding optimal $\mu$-domains with respect to certain properties, such as area or perimeter. More precisely, one can consider the problem

\begin{equation}
\inf_{U:\mu\text{-domain}}\mathcal{J}(U) \label{optimization problem}
\end{equation}
where $\mathcal{J}$ is some functional, and $\mu$ is a distribution with bounded support. Typically, results in this field progress through successive refinements: 
\begin{enumerate}
    \item establishing general bounds on the functional.
    \item proving their sharpness. 
    \item identifying extremal domains.
    \item deriving quantitative stability estimates.
\end{enumerate}
In the context of the planar/conformal Skorokhod embedding problem, extremal questions have been first studied in \cite{mariano2020}, where bounds of types 1) and 2) were established. Additionally, extremal solutions in the classical Skorokhod embedding problem have a long history, with notable results presented in the survey \cite{Obloj2004}. The recent work \cite{beiglbock2017optimal} further clarified these extremal properties by framing the problem within optimal transport. As mentioned in the introduction, one instance of problem \ref{optimization problem} has been treated in \cite{Boudabra2020}, where $\mathcal{J}$ is the principal Dirichlet eigenvalue. For a broader perspective on shape optimization and extremal problems, we refer to the monograph \cite{henrot2017shape}. This direction of research not only deepens the connection between geometry, PDEs, and planar Brownian motion but also opens new avenues for further exploration.

\bibliographystyle{plain}
\bibliography{Skorokhodenergy}

\end{document}